\documentclass[a4paper,oneside,10pt, reqno]{amsproc}
\usepackage{amsthm}
\usepackage{amsfonts}
\usepackage{amsmath}
\usepackage{amssymb}
\usepackage{mathrsfs}
\usepackage{epsfig}
\usepackage{graphicx}
\usepackage{epstopdf}
\usepackage{tikz-cd}
\usepackage{color}
\usepackage{ifthen}
\usepackage{float}
\usepackage{tikz}
\usetikzlibrary{arrows}
\usetikzlibrary{calc}
\usepackage{mathtools}

\makeatletter
\@namedef{subjclassname@2010}{%
\textup{2010} Mathematics Subject Classification}
\makeatother

\newtheorem{theorem}{Theorem}[section]
\newtheorem{proposition}[theorem]{Proposition}
\newtheorem{corollary}[theorem]{Corollary}
\newtheorem{lemma}[theorem]{Lemma}

\theoremstyle{remark}

\theoremstyle{definition}
\newtheorem{example}[theorem]{Example}

\newcommand{\bq}{\begin{equation}}
\newcommand{\eq}{\end{equation}}
\newcommand{\beqn}{\begin{eqnarray*}}
\newcommand{\eeqn}{\end{eqnarray*}}
\newcommand{\beq}{\begin{eqnarray}}
\newcommand{\eeq}{\end{eqnarray}}

\newcommand{\rar}{\rightarrow}

\newcommand{\bc}{\begin{centre}}
\newcommand{\ec}{\end{centre}}

\newcommand{\ba}{\begin{array}}
\newcommand{\ea}{\end{array}}

\newcommand{\inp}[2]{\langle{#1},\,{#2} \rangle}

\begin{document}
\title[Circular operators]{Circular operators and their strong circularity}
\author[S. Ghara]{Soumitra Ghara}
\author[S. Kumar]{Surjit Kumar}
\author[S. Trivedi]{Shailesh Trivedi}
\address{Department of Mathematics\\
Indian Institute of Technology  Kharagpur, Midnapore-721302, India}
   \email{soumitra@maths.iitkgp.ac.in\\ ghara90@gmail.com}
\address{Department of Mathematics \\  Indian Institute of Technology Madras, Chennai  600036, India}
\email{surjit@iitm.ac.in}
\address{Department of Mathematics, Birla Institute of Technology and Science, Pilani, Pilani Campus, Vidya Vihar, Pilani, Rajasthan 333031, India}
 \email{shailesh.trivedi@pilani.bits-pilani.ac.in}

\thanks{The work of the first named  is supported by  ARG-MATRICS grant by the ANRF (File No: ANRF/ARGM/2025/001583/MTR) and INSPIRE Faculty Fellowship (DST/INSPIRE/04/2021/002555). The work of the second named author was partially supported by an IRG Grant (Ref. No. ANRF/IRG/2024/000432/MS) from the Anusandhan National Research Foundation (ANRF). The work of the third author is supported through the OPERA (FR/SCM/03-Nov-2022/MATH), funded by BITS Pilani.}
   \subjclass[2020]{Primary 47B02, Secondary 47A65, 47A67}
\keywords{circular operator, strong circularity, unitary representation, group action}

\date{}
\begin{abstract}
Circular operators have been studied extensively since the work of R. Gellar, who conjectured that every circular operator on a complex separable Hilbert space is strongly circular. In this short note, we show that circularity and strong circularity coincide for bounded operators that are finite or countably infinite direct sums of irreducible operators. This considerably narrows the search for potential counterexamples to Gellar’s conjecture. As an application, we prove that every circular operator in the Cowen–Douglas class is strongly circular. In addition, we obtain several general results on circular operators that reveal the significance of the hyper-range and the Cauchy dual.
\end{abstract}

\maketitle

\section{Introduction}

The circular symmetry of bounded linear operators on Hilbert spaces has attracted sustained attention in operator theory over the years. Circular symmetry plays a significant role in the study of spectral analysis, functional calculus, and unitary equivalence. Recall that a bounded linear operator $T$ on a complex Hilbert space $\mathcal H$ is said to be {\it circular} if for each $\lambda \in \mathbb T$, the unit circle, there exists a unitary $U_\lambda$ on $\mathcal H$ such that
\[
U_\lambda T = \lambda T U_\lambda\ \text{ for all }\ \lambda \in \mathbb T.
\]
In general, the map $\lambda \mapsto U_\lambda$ is not a homomorphism of $\mathbb T$ into $\mathcal B(\mathcal H)$, but if $\lambda \mapsto U_\lambda$ can be chosen as a strongly continuous unitary representation of $\mathbb T$, then we say that $T$ is {\it strongly circular}. 

The systematic study of circular operators was initiated by R. Gellar in \cite{G}, where he investigated strongly circular normal and subnormal operators and constructed an example of a circular operator on a non-separable Hilbert space that fails to be strongly circular. This led him to conjecture that such a pathology cannot occur on separable Hilbert spaces, that is, every circular operator on a separable Hilbert space is in fact strongly circular. A major advance towards this conjecture was later achieved by Arveson et.al. in their joint work in \cite{AHHK}, where it was shown that every irreducible circular operator on a complex separable Hilbert space is strongly circular, thereby establishing the conjecture in the irreducible case.

In the present short note, we continue this line of investigation and broadens the scope of the known positive results. We consider bounded linear operators on complex separable Hilbert spaces that admit a finite or countably infinite direct sum decomposition into irreducible operators. Our main theorem establishes that for such operators, circularity implies strong circularity. Formally, the main result of this note is stated as follows:

\begin{theorem}\label{main-thm}
Let $T$ be a bounded linear operator on a complex separable Hilbert space $\mathcal H$ such that $T = \bigoplus_{j=1}^n T_j$ with respect to the decomposition $\mathcal H = \bigoplus_{j=1}^n \mathcal H_j$, where $n$ is a positive integer with $1 \leqslant n \leqslant \infty$, and each $T_j$ is irreducible. Then $T$ is circular if and only if $T$ is strongly circular.
\end{theorem}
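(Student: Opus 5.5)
We prove the nontrivial direction: if $T$ is circular, then $T$ is strongly circular. The plan is to build a strongly continuous unitary representation $\lambda \mapsto V_\lambda$ of $\mathbb T$ with $V_\lambda T V_\lambda^* = \lambda T$. We assemble it from the irreducible result of Arveson et al.\ \cite{AHHK} applied summand by summand, after sorting the summands by how circularity permutes them.

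\textbf{Step 1 (grouping).} Group the summands into unitary equivalence classes. Write $T \cong \bigoplus_{k} (S_k \otimes I_{m_k})$, where the $S_k$ are irreducible and pairwise unitarily inequivalent, and $m_k \in \{1,2,\dots,\infty\}$ is the multiplicity. For irreducible operators, the reducing subspaces of a direct sum of pairwise inequivalent irreducibles are exactly the sums of full isotypic blocks. I would verify this via Schur's lemma: the commutant of $\bigoplus_k S_k\otimes I_{m_k}$ is $\bigoplus_k I\otimes \mathcal B(\mathbb C^{m_k})$.

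\textbf{Step 2 (the orbit structure).} Fix $\lambda\in\mathbb T$. Since $\lambda T \cong T$ and $\lambda T=\bigoplus_k (\lambda S_k)\otimes I_{m_k}$, uniqueness of the isotypic decomposition shows that $\lambda$ induces a multiplicity-preserving permutation $\sigma_\lambda$ of the classes $[S_k]$ with $\lambda S_k\cong S_{\sigma_\lambda(k)}$. Consider the stabilizer $G_k=\{\lambda : \lambda S_k \cong S_k\}$, a subgroup of $\mathbb T$. The key claim is that $G_k=\mathbb T$ for every $k$, i.e.\ each $S_k$ is itself circular.

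To prove the claim, I would use that the orbit $\{\lambda S_k\}$ consists of pairwise inequivalent irreducibles indexed by $\mathbb T/G_k$. All of these appear among the countably many classes of $T$, so $\mathbb T/G_k$ is countable. It remains to show $G_k$ is closed. Then a closed subgroup of countable index in $\mathbb T$ must be all of $\mathbb T$, because the proper closed subgroups are finite and have uncountable index.

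\textbf{Main obstacle: closedness of $G_k$.} I would try the following. If $\lambda_n\to\lambda$ with $U_nS_kU_n^*=\lambda_nS_k$, then the $U_n$ lie in the unit ball, which is WOT-compact. Because the Hilbert space is separable, this ball is metrizable, so we can pass to a weak-operator limit point $W$ satisfying $WS_k=\lambda S_kW$ and $WS_k^*=\bar\lambda S_k^*W$. The difficulty is that $W$ may vanish. A cleaner route is a measurability argument instead. The set $\{(\lambda,U): US_kU^*=\lambda S_k\}$ is Borel in the Polish space $\mathbb T\times\mathcal U(\mathcal H)$. Hence $G_k$ is an analytic subgroup, and therefore universally measurable. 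A measurable subgroup of countable index has positive Haar measure, so by Steinhaus it is open, hence all of $\mathbb T$. This bypasses closedness entirely, and I would adopt it.

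\textbf{Step 3 (assembly).} Each $S_k$ is irreducible and circular, so by \cite{AHHK} it admits a strongly continuous unitary representation $W^{(k)}_\lambda$ with $W^{(k)}_\lambda S_k W^{(k)*}_\lambda=\lambda S_k$. Set $V_\lambda=\bigoplus_k W^{(k)}_\lambda\otimes I_{m_k}$, transported back to $\mathcal H$ via the unitary equivalence of Step 1. This is a strongly continuous representation, since a countable direct sum of uniformly bounded strongly continuous representations is strongly continuous. It intertwines $T$ with $\lambda T$, so $T$ is strongly circular. The converse is trivial.
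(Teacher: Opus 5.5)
Your proof is correct and has the same architecture as the paper's. You group the summands into unitary equivalence classes. You use uniqueness of the irreducible decomposition to get an action of $\mathbb T$ on the countable set of classes; the paper cites \cite[Theorem 3.1]{FJW} for this, while you derive it from the Schur-type intertwiner argument. Orbit--stabilizer then shows each stabilizer $G_k$ has countable index, and $G_k$ is analytic as the projection of the closed set $\{(\lambda,U) : US_k=\lambda S_kU\}$. Once $G_k=\mathbb T$ is forced, you assemble the representations from \cite[Proposition 1.3]{AHHK}.

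The one genuine difference is the regularity step. The paper argues by category: Lusin--Sierpi\'nski gives $G_k$ the Baire property, the Baire category theorem makes some coset (hence $G_k$) non-meager, and Pettis yields a neighbourhood of $1$. You argue by measure: analytic sets are universally measurable, translation invariance and countable additivity of Haar measure $m$ force $m(G_k)>0$, and Steinhaus--Weil gives openness. These are the two standard dual forms of the same automatic-continuity phenomenon. The category version works in any Polish group, while yours needs Haar measure. On $\mathbb T$ your route is slightly more economical: $1=m(\mathbb T)=[\mathbb T:G_k]\,m(G_k)$ already forces finite index. Since $\mathbb T$ is divisible, a finite-index subgroup is all of $\mathbb T$, so you do not even need Steinhaus.

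You were also right to abandon the WOT-limit attempt at closedness, since the weak limit of the intertwining unitaries can be $0$.
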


The proof of Theorem \ref{main-thm} utilizes advanced techniques of descriptive set theory. A key idea is to construct a subset of equivalence classes of irreducible operators and to exploit the induced group action (see Lemma \ref{grp-action}). This perspective yields a transparent mechanism that forces each irreducible summand to inherit circularity, allowing one to invoke the irreducible case of \cite{AHHK}. Although the full resolution of Gellar’s conjecture remains open, our result substantially narrows down the possible form of a counterexample, should one exist. In particular, any such example must exhibit an essentially direct integral decomposition into irreducible components (see Example \ref{direct-integral}).

In addition to the Theorem \ref{main-thm}, we develop several structural results concerning circular and left-invertible (circular) operators. These include conditions in terms of wandering subspaces, relations with the hyper-range and the Cauchy dual, and orthogonality criteria. Beyond their role in the proof of Theorem \ref{main-thm}, these results contribute to a refined understanding of how circular symmetry interacts with analyticity and the wandering subspace property of the operator in question.

We set below the notations and recall elementary facts from operator theory before proceeding towards the proof of Theorem \ref{main-thm}. The set of non-negative integers is denoted by $\mathbb N$ and the field of complex numbers is denoted by $\mathbb C$. The unit circle $\{\lambda \in \mathbb C : |\lambda| = 1\}$ is denoted by $\mathbb T$.
For a bounded linear operator $T$ on a complex separable Hilbert space $\mathcal H$, the kernel, range and the adjoint of $T$ are denoted by $\ker T$, $\operatorname{ran} T$ and $T^*$, respectively. The spectrum of $T$ is denoted by $\sigma(T)$. We say that $T$ is left-invertible if there exists a bounded linear operator $L$ on $\mathcal H$ such that $LT=I$. It turns out that $T$ is left-invertible if and only if $T^*T$ is invertible. For a left-invertible operator $T$, the Cauchy dual of $T$, denoted by $T'$, is defined as $T' := T(T^*T)^{-1}$. The hyper-range of $T$
is denoted by $T^\infty(\mathcal H)$ and is given by
\beqn
T^\infty(\mathcal H) := \bigcap_{n \in \mathbb N} T^n(\mathcal H).
\eeqn
The subspace $T^\infty(\mathcal H)$ is not closed in general, but if $T$ is left-invertible, then $T^\infty(\mathcal H)$ is a closed subspace of $\mathcal H$. For a closed subspace $\mathcal M$ of $\mathcal H$, the orthogonal projection of $\mathcal H$ onto $\mathcal M$ is denoted by $P_{\mathcal M}$. We say that $T$ has the wandering subspace property if $\mathcal H = \bigvee_{n \in \mathbb N} T^n(\ker T^*)$, where $\bigvee$ denotes the closed linear span.

For the sake of completeness, we include a few elementary facts from topological group theory. A topological group is a group equipped with a topology such that the group operations are continuous. Let $X$ be a non-empty set and $G$ be a topological group. We say that $G$ acts on $X$ if there exists a well-defined map $(g,x) \mapsto g\cdot x$ from $G \times X$ into $X$. The orbit of an element $x \in X$ is given by
\[Orb(x) = \{g\cdot x : g \in G\},\]
whereas the stabilizer of $x$ is given by
\[Stab(x) = \{g \in G : g\cdot x = x\}.\]
Note that $Stab(x)$ is a subgroup of $G$. The orbit-stabilizer theorem states that the cardinality of $Orb(x)$ is equal to the index of $Stab(x)$. In other words, there is a bijective correspondence between $Orb(x)$ and the set of all distinct left cosets of $Stab(x)$. 
 
\section{Left-invertible circular operators}

This section presents several results concerning circular and left-invertible circular operators. Beyond their relevance to the Theorem \ref{main-thm}, these results are of independent interest and contribute to a broader understanding of the subject. We begin with the following proposition which is motivated from \cite[Lemma 2.3]{GKMP}.

\begin{proposition}
Let $T$ be a bounded linear operator on a complex separable Hilbert space $\mathcal H$ with the wandering subspace property. Then $T$ is circular if and only if for each $\lambda \in \mathbb T$, there exists a unitary $V_\lambda$ on $\ker T^*$ such that the map $U_\lambda : \mathrm{span}\{T^n(\ker T^*) : n \in \mathbb N\} \rar \mathcal H$ defined as
\beq\label{U-theta-eq}
U_\lambda \Big(\sum_{j=0}^n T^j x_j \Big) = \sum_{j=0}^n (\lambda T)^j V_\lambda x_j, \quad x_j \in \ker T^*, \ n \in \mathbb N,
\eeq
extends to a unitary on $\mathcal H$.
\end{proposition}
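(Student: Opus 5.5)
We prove the two implications separately; the converse is immediate and the forward direction carries the content.

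\emph{Sufficiency.} Suppose that for each $\lambda\in\mathbb T$ there is a unitary $V_\lambda$ on $\ker T^*$ such that the map in \eqref{U-theta-eq} extends to a unitary $U_\lambda$ on $\mathcal H$. Fix a finite sum $x=\sum_{j=0}^n T^jx_j$ with $x_j\in\ker T^*$. Then
\[
U_\lambda T x=U_\lambda\Big(\sum_{j=0}^n T^{j+1}x_j\Big)=\sum_{j=0}^n\lambda^{j+1}T^{j+1}V_\lambda x_j=\lambda T U_\lambda x .
\]
The wandering subspace property says these finite sums are dense in $\mathcal H$. Both sides are bounded operators, so $U_\lambda T=\lambda TU_\lambda$ on all of $\mathcal H$. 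Hence $T$ is circular.

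\emph{Necessity.} Suppose $U_\lambda T=\lambda TU_\lambda$ with $U_\lambda$ unitary. Taking adjoints gives $T^*U_\lambda^*=\bar\lambda U_\lambda^*T^*$. Conjugating by $U_\lambda$ yields $U_\lambda T^*=\bar\lambda T^*U_\lambda$, that is, $U_\lambda T^* U_\lambda^*=\bar\lambda T^*$.
\begin{itemize}
\item Consequently $U_\lambda(\ker T^*)=\ker T^*$: if $T^*x=0$, then $T^*U_\lambda x=\lambda U_\lambda T^*x=0$; the same argument with $U_\lambda^*$ gives the reverse inclusion.
\item Define $V_\lambda:=U_\lambda|_{\ker T^*}$. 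It is a unitary on $\ker T^*$, being an isometry onto $\ker T^*$.
\item Iterating the intertwining relation gives $U_\lambda T^j=\lambda^jT^jU_\lambda$ for every $j\in\mathbb N$.
\end{itemize}
Therefore
\[
U_\lambda\Big(\sum_{j} T^jx_j\Big)=\sum_j(\lambda T)^jV_\lambda x_j .
\]
So the map in \eqref{U-theta-eq} is exactly the restriction of the unitary $U_\lambda$, and in particular it is well defined and extends to a unitary on $\mathcal H$.

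\emph{Where the difficulty lies.} The only delicate point is the invariance $U_\lambda(\ker T^*)=\ker T^*$, which follows from the adjoint intertwining relation as shown. Well-definedness of \eqref{U-theta-eq} is not an extra issue in the converse direction, because the extendability to a unitary is part of the hypothesis.
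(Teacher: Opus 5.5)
Your proof is correct and follows essentially the same route as the paper: for necessity you restrict the intertwining unitary to $\ker T^*$, and for sufficiency you use density of $\mathrm{span}\{T^n(\ker T^*)\}$ together with boundedness of $U_\lambda T$ and $\lambda T U_\lambda$. The one addition is that you justify $U_\lambda(\ker T^*)=\ker T^*$ via the adjoint relation $T^*U_\lambda=\lambda U_\lambda T^*$, which the paper only asserts.
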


\begin{proof}
Suppose that $T$ is circular. Let $\lambda \in \mathbb T$. Then there exists a unitary $U_\lambda$ on $\mathcal H$ such that $U_\lambda T = \lambda T U_\lambda$. Note that $U_\lambda(\ker T^*) = \ker T^*$. Set $V_\lambda = U_\lambda|_{\ker T^*}$. Then we get
\beqn
U_\lambda \Big(\sum_{j=0}^n T^j x_j \Big) = \sum_{j=0}^n (\lambda T)^j U_\lambda x_j = \sum_{j=0}^n (\lambda T)^j V_\lambda x_j, \quad x_j \in \ker T^*, \ n \in \mathbb N.
\eeqn

Conversely, for each $\lambda \in \mathbb T$, let $U_\lambda$ be as defined in \eqref{U-theta-eq} which extends to a unitary on $\mathcal H$. Let $x \in \mathcal H$. Since $T$ has the wandering subspace property, there exists a sequence $\{y_n\}_{n \geqslant 1} \subseteq \mathrm{span}\{T^n(\ker T^*) : n \in \mathbb N\}$ such that $y_n \rar x$ as $n \rar \infty$. Let 
\beqn
y_n = \sum_{j=0}^{k_n} T^j x^{(n)}_j, \quad x_j^{(n)} \in \ker T^* \ \text{ and }\ n,\, k_n \in \mathbb N. 
\eeqn
Then we obtain that
\beqn
U_\lambda T x &=& \lim_{n \rar \infty} U_\lambda T y_n = \lim_{n \rar \infty} U_\lambda \Big(\sum_{j=0}^{k_n} T^{j+1} x^{(n)}_j\Big) \overset{\eqref{U-theta-eq}}= \lim_{n \rar \infty} \lambda T \sum_{j=0}^{k_n} (\lambda T)^j V_\lambda x^{(n)}_j \\
&=& \lim_{n \rar \infty} \lambda T U_\lambda y_n = \lambda T U_\lambda x. 
\eeqn
This shows that $T$ is circular, completing the proof.
\end{proof}

Assume that $T$ has the wandering subspace property. If $\{T^n(\ker T^*)\}_{n \in \mathbb N}$ is a sequence of mutually orthogonal subspaces, then by taking $V_\lambda = I$ in \eqref{U-theta-eq}, it can be easily deduced that $T$ is circular. Conversely, if $T$ is circular, it need not follow that the subspaces $\{T^n(\ker T^*)\}_{n \in \mathbb N}$ are mutually orthogonal; see \cite{CT} for appropriate examples. Nevertheless, when $\dim\ker T^*=1$, the circularity of $T$ is equivalent to the mutual orthogonality of the family $\{T^n(\ker T^*)\}_{n \in \mathbb N}$. The following corollary, which otherwise follows from \cite[Theorem 2.5]{CY}, illustrates this fact. 

\begin{corollary}\label{cyclic-circular}
Let $T$ be a cyclic operator on a complex separable Hilbert space $\mathcal H$ with a cyclic vector $x \in \ker T^*$. Then $T$ is circular if and only if $\{T^n x\}_{n \in \mathbb N}$ is a sequence of mutually orthogonal vectors.
\end{corollary}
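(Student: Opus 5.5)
The plan is to reduce both directions to the preceding proposition. Since $x$ is cyclic, $\mathcal H = \bigvee_{n \in \mathbb N} \{T^n x\}$. The first step is to check that $\ker T^*$ is one-dimensional, equal to $\mathrm{span}\{x\}$ (when $x \neq 0$). Indeed, if $y \in \ker T^*$ is orthogonal to $x$, then $\langle y, T^n x\rangle = \langle T^{*n} y, x\rangle = 0$ for $n \geqslant 1$, and also for $n=0$. Cyclicity then forces $y=0$. Hence $T$ has the wandering subspace property, and the preceding proposition applies with $V_\lambda$ a unitary on a one-dimensional space, that is, multiplication by some scalar $\mu_\lambda \in \mathbb T$.

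\textbf{Sufficiency.} Suppose $\{T^n x\}$ is mutually orthogonal. Take $V_\lambda = I$. The map defined by $U_\lambda(\sum_j c_j T^j x) = \sum_j c_j \lambda^j T^j x$ is isometric, because by Pythagoras both sides have norm squared $\sum_j |c_j|^2 \|T^j x\|^2$. It is well defined and has dense range, since the vectors $\lambda^j T^j x$ span the same dense subspace. So it extends to a unitary, and the proposition yields that $T$ is circular. This could also be stated directly, as in the remark preceding the corollary.

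\textbf{Necessity.} Suppose $T$ is circular. By the proposition (or directly from $U_\lambda(\ker T^*) = \ker T^*$), for each $\lambda$ there is a unitary $U_\lambda$ with $U_\lambda x = \mu_\lambda x$ and $U_\lambda T^n x = \lambda^n \mu_\lambda T^n x$. Since $U_\lambda$ preserves inner products,
\[
\langle T^m x, T^n x\rangle = \langle U_\lambda T^m x, U_\lambda T^n x\rangle = \lambda^{m-n} |\mu_\lambda|^2 \langle T^m x, T^n x\rangle = \lambda^{m-n}\langle T^m x, T^n x\rangle .
\]
For $m \neq n$, choose $\lambda \in \mathbb T$ with $\lambda^{m-n} \neq 1$. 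This gives $\langle T^m x, T^n x\rangle = 0$, which is the required orthogonality.

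\textbf{Main obstacle.} The only step needing care is the identification $\ker T^* = \mathrm{span}\{x\}$. It is what lets us conclude that $U_\lambda$ acts on $x$ by a unimodular scalar, rather than merely mapping $x$ somewhere inside $\ker T^*$. The trivial case $x=0$, where $\mathcal H = \{0\}$, is immediate.
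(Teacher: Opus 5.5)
Your proof is correct and follows the paper's argument. You reduce to the preceding proposition with $V_\lambda$ acting as a unimodular scalar, derive $\langle T^m x, T^n x\rangle = \lambda^{m-n}\langle T^m x, T^n x\rangle$ for necessity, and take $V_\lambda = I$ for sufficiency; you also supply the check that $\ker T^* = \mathrm{span}\{x\}$ (which the paper only asserts) and the isometry/dense-range argument for the unitary extension, both of which are sound.
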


\begin{proof}
Note that $\ker T^*$ is one dimensional. Suppose that $T$ is circular. By the preceding proposition, for each $\lambda \in \mathbb T$, there exists $c_\lambda \in \mathbb C$ with $|c_\lambda| = 1$ such that the map $U_\lambda : \mathrm{span}\{T^n x : n \in \mathbb N\} \rar \mathcal H$ defined as
\beqn
U_\lambda \Big(\sum_{j=0}^n T^j x \Big) = \sum_{j=0}^n (\lambda T)^j c_\lambda x,  \quad n \geqslant 0,
\eeqn
extends to a unitary on $\mathcal H$. Thus, for all $m, n \in \mathbb N$, we have
\beqn
\inp{T^n x}{T^m x} = \inp{U_\lambda T^n x}{U_\lambda T^m x} = \inp{(\lambda T)^n c_\lambda x}{(\lambda T)^m c_\lambda x}= \lambda^{n-m}\inp{T^n x}{T^m x}.
\eeqn 
Consequently, $\inp{T^n x}{T^m x} = 0$ if $m \neq n$.

Conversely, if $\{T^n x\}_{n \in \mathbb N}$ is a sequence of mutually orthogonal vectors, then $\mathcal H = \oplus_{n \in \mathbb N} \mathcal H_n$, where $\mathcal H_n = \text{span}\{T^n x\}$, $n \in \mathbb N$. Taking $V_\lambda = I$ in \eqref{U-theta-eq}, it is now easy to verify that $U_\lambda$, as defined in \eqref{U-theta-eq}, extends to a unitary on $\mathcal H$. 
\end{proof}

%
%

The following proposition gives a partial converse of the fact stated prior to the foregoing corollary.

\begin{proposition}
Let $T$ be a circular operator on a complex separable Hilbert space $\mathcal H$. Suppose that $k := \dim \ker T^* < \infty$. Then there exists an orthonormal basis $\mathscr B = \{e_j : j=1 \ldots, k\}$ of $\ker T^*$ such that for all $j = 1, \ldots, k$, $\inp{T^m e_j}{T^n e_j} = 0$ for all $m, n \in \mathbb N$ with $m \neq n$.
\end{proposition}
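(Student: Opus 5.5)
The plan is to encode the Gram data of the vectors $T^n x$, $x \in \ker T^*$, in finitely many operators on the finite-dimensional space $\ker T^*$. Circularity will force each of these operators to be twisted by a scalar under conjugation by one fixed unitary on $\ker T^*$. Choosing an eigenbasis of that unitary then kills the required diagonal entries.

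Write $\mathcal K := \ker T^*$, so $\dim \mathcal K = k < \infty$. For $m, n \in \mathbb N$, define $A_{m,n} := P_{\mathcal K} T^{*m} T^n|_{\mathcal K}$, an operator on $\mathcal K$. It satisfies
\[
\inp{T^n x}{T^m y} = \inp{A_{m,n} x}{y}, \quad x, y \in \mathcal K.
\]
Fix once and for all $\lambda \in \mathbb T$ that is not a root of unity, for instance $\lambda = e^{2\pi i \theta}$ with $\theta$ irrational. Let $U_\lambda$ be a unitary with $U_\lambda T = \lambda T U_\lambda$. As observed in the proof of the first proposition of this section, $U_\lambda(\ker T^*) = \ker T^*$, so $V := U_\lambda|_{\mathcal K}$ is a unitary on $\mathcal K$. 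Iterating the intertwining relation gives $U_\lambda T^n = \lambda^n T^n U_\lambda$. Hence, for $x, y \in \mathcal K$,
\[
\inp{T^n x}{T^m y} = \inp{U_\lambda T^n x}{U_\lambda T^m y} = \lambda^{n}\bar\lambda^{m}\inp{T^n V x}{T^m V y}.
\]
In operator form this reads $V^* A_{m,n} V = \bar\lambda^{\,n-m} A_{m,n}$ for all $m, n$. This is the same computation as in Corollary \ref{cyclic-circular}, now carried out with operator coefficients.

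Since $\mathcal K$ is finite-dimensional and $V$ is unitary, the spectral theorem gives an orthonormal basis $\mathscr B = \{e_1, \ldots, e_k\}$ of $\mathcal K$ with $V e_j = \mu_j e_j$ and $|\mu_j| = 1$. Then, for each $j$,
\[
\inp{V^* A_{m,n} V e_j}{e_j} = |\mu_j|^2 \inp{A_{m,n} e_j}{e_j} = \inp{A_{m,n} e_j}{e_j}.
\]
Combining this with the twisted relation yields
\[
\inp{A_{m,n} e_j}{e_j} = \bar\lambda^{\,n-m} \inp{A_{m,n} e_j}{e_j}.
\]
For $m \neq n$ we have $\lambda^{n-m} \neq 1$, because $\lambda$ is not a root of unity. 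Therefore $\inp{T^n e_j}{T^m e_j} = \inp{A_{m,n} e_j}{e_j} = 0$, which is exactly the claim.

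The one point needing care is that a single basis must work for all pairs $m \neq n$ simultaneously. Using a different $\lambda$ for different pairs, or arguing only that each $A_{m,n}$ is nilpotent, would not produce a common basis. Fixing one $\lambda$ of infinite order avoids this, since its single unitary $V$ serves every pair $(m,n)$ at once. Finite-dimensionality of $\ker T^*$ is used only to guarantee that $V$ has an orthonormal eigenbasis.
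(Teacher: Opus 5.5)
Your proof is correct and follows the paper's argument. Both fix a single $\lambda = e^{i\theta}$ of infinite order, restrict the intertwining unitary $U_\lambda$ to the finite-dimensional invariant space $\ker T^*$, take an eigenbasis, and read off $\inp{T^m e_j}{T^n e_j} = \lambda^{m-n}\inp{T^m e_j}{T^n e_j}$; packaging the Gram data into the operators $A_{m,n}$ is only cosmetic. Your requirement that $\lambda$ not be a root of unity is the precise form of the paper's condition on $\theta$, which should read $n\theta \notin 2\pi\mathbb Z$ for every nonzero integer $n$.
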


\begin{proof}
Since $T$ is circular, for each real $\theta$, there exists a unitary $U_\theta$ on $\mathcal H$ such that $U_\theta T = e^{i\theta} T U_\theta$. Note that $U_\theta(\ker T^*) = \ker T^*$ for all real $\theta$. Let $\theta$ be such that $n\theta \neq 2\pi$ for all $n \in \mathbb Z$. Then there exists an orthonormal basis $\mathscr B = \{e_j : 1 \leqslant j \leqslant k\}$ of $\ker T^*$ such that $U_\theta e_j = \lambda_{\theta,j} e_j$ for all $j = 1, \ldots, k$ with $|\lambda_{\theta,j}| = 1$. Thus, for all $m, n \geqslant 0$ and $j = 1, \ldots, k$, we get
\beqn
\inp{T^m e_j}{T^n e_j} &=& \inp{U_\theta T^m e_j}{U_\theta T^n e_j} = \inp{e^{im\theta} T^m U_\theta e_j}{e^{in\theta}T^n U_\theta e_j}\\
&=& e^{i(m-n)\theta}\inp{T^m e_j}{T^n e_j}.
\eeqn
Hence, $\inp{T^m e_j}{T^n e_j} = 0$ if $m \neq n$ for all $m, n \in \mathbb N$ and $j = 1, \ldots, k$.
\end{proof}

For a bounded linear operator $T$ on a complex separable Hilbert space $\mathcal H$, let $\mathcal G(T)$ denote the set of all complex numbers $\lambda$ such that $\lambda T$ is unitarily equivalent to $T$. It follows from \cite[Lemma 1.1]{AHHK} that $\mathcal G(T)$ is a subgroup of the unit circle $\mathbb T$. Note that if $\mathcal G(T) = \mathbb T$, then $T$ is circular. The forthcoming proposition gives an estimate of $\mathcal G(T)$ for a left-invertible operator $T$, and relates the circularity of $T$ with that of its Cauchy dual $T'$. Moreover, it also shows that the circularity of a non-analytic left-invertible operator can be determined by its restriction on the hyper-range.

\begin{proposition}\label{Tinfinity}
Let $T$ be a left-invertible operator on a complex separable Hilbert space $\mathcal H$. Then the following statements hold:
\begin{enumerate}
\item[(i)] $T(T^\infty(\mathcal H)) = T^\infty(\mathcal H)$.
\item[(ii)] For $\lambda \in \mathcal G(T)$, let $U_\lambda$ be a unitary operator on $\mathcal H$ which intertwines $T$ and $\lambda T$. Then $T^\infty(\mathcal H)$ is a reducing subspace of $U_\lambda$ for each $\lambda \in \mathcal G(T)$. Moreover, $U_\lambda|_{T^\infty(\mathcal H)}$ is unitary on $T^\infty(\mathcal H)$.
\item[(iii)] $\mathcal G(T) \subseteq \mathcal G(T|_{T^\infty(\mathcal H)})$ and $\mathcal G(T) \subseteq \mathcal G(P_{T^\infty(\mathcal H)^\bot}T|_{T^\infty(\mathcal H)^\bot})$.
\item[(iv)] Let $T'$ be the Cauchy dual of $T$. Then $\mathcal G(T) = \mathcal G(T')$.
\item[(v)] Let $T$ be analytic and $T'$ be the Cauchy dual of $T$. Suppose that $\ker T^* = \text{span}\{x\}$. Then $T$ is circular if and only if $\{T'^n x\}_{n \in \mathbb N}$ is a sequence of mutually orthogonal vectors.
\end{enumerate}
\end{proposition}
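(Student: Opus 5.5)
We take the five parts in order, writing $\mathcal M := T^\infty(\mathcal H)$ throughout. The one fact we use repeatedly is that $T$ is bounded below, so $T$ is injective with closed range, and so is every power $T^n$.

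\emph{Part (i).} The inclusion $T(\mathcal M)\subseteq \mathcal M$ is immediate. For the reverse, let $y\in\mathcal M$. For each $n\geqslant 1$ write $y=T^{n}z_n$, and set $x_n:=T^{n-1}z_n$. Then $Tx_n=y$ for every $n$, and injectivity of $T$ forces all the $x_n$ to equal a single vector $x$. This $x$ lies in $T^{n-1}(\mathcal H)$ for every $n$, so $x\in\mathcal M$ and $y=Tx$. Hence $T(\mathcal M)=\mathcal M$.

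\emph{Part (ii).} From $U_\lambda T=\lambda TU_\lambda$ we get $U_\lambda T^n=\lambda^nT^nU_\lambda$, and therefore $U_\lambda(T^n\mathcal H)=T^n(U_\lambda\mathcal H)=T^n\mathcal H$. Taking the intersection over $n$ gives $U_\lambda\mathcal M=\mathcal M$. A unitary that maps a closed subspace onto itself also leaves its orthogonal complement invariant. So $\mathcal M$ reduces $U_\lambda$, and the restriction of $U_\lambda$ to $\mathcal M$ is a surjective isometry, i.e.\ a unitary.

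\emph{Part (iii).} Put $W:=U_\lambda|_{\mathcal M}$, which is unitary by (ii). Restricting $U_\lambda T=\lambda TU_\lambda$ to $\mathcal M$, which is $T$-invariant by (i), shows that $W$ intertwines $T|_{\mathcal M}$ and $\lambda T|_{\mathcal M}$. For the compression, let $Q:=P_{\mathcal M^\perp}$. Since $\mathcal M$ reduces $U_\lambda$, the projection $Q$ commutes with $U_\lambda$. Then
\[U_\lambda QT|_{\mathcal M^\perp}=QU_\lambda T|_{\mathcal M^\perp}=\lambda QTU_\lambda|_{\mathcal M^\perp},\]
so $U_\lambda|_{\mathcal M^\perp}$ intertwines the compression $QT|_{\mathcal M^\perp}$ with $\lambda$ times itself.

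\emph{Part (iv).} If $U T=\lambda TU$ with $U$ unitary, then $UT^*=\bar\lambda T^*U$, and hence $U(T^*T)=T^*TU$. It follows that $U$ commutes with $(T^*T)^{-1}$, and so
\[UT'=UT(T^*T)^{-1}=\lambda TU(T^*T)^{-1}=\lambda T(T^*T)^{-1}U=\lambda T'U.\]
This gives $\mathcal G(T)\subseteq\mathcal G(T')$. For the reverse inclusion, note that $T'$ is again left-invertible and that $(T')'=T$, because $T'^*T'=(T^*T)^{-1}$. Applying the same argument to $T'$ gives $\mathcal G(T')\subseteq\mathcal G(T)$.

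\emph{Part (v).} This is the only step that depends on structural input. First, $\ker T'^*=\ker T^*$, since $T'^*=(T^*T)^{-1}T^*$ and $(T^*T)^{-1}$ is invertible. By (iv), $T$ is circular if and only if $T'$ is. We want to apply Corollary \ref{cyclic-circular} to $T'$ with the vector $x$, so we need $x$ to be cyclic for $T'$. Here we use Shimorin's wandering subspace theorem for analytic left-invertible operators: the Cauchy dual $T'$ satisfies $\mathcal H=\bigvee_n T'^n(\ker T'^*)$. This should follow from Shimorin's Wold-type decomposition, which gives the wandering subspace property for $T'$ when $T$ is analytic; if needed we will cite it rather than reprove it.

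The main obstacle is verifying exactly this hypothesis: we need $T'$ to have the wandering subspace property, not $T$. With $\ker T'^*=\mathrm{span}\{x\}$, that property says precisely that $x$ is a cyclic vector for $T'$ lying in $\ker T'^*$. Corollary \ref{cyclic-circular} then shows that $T'$ is circular if and only if $\{T'^nx\}_{n\in\mathbb N}$ is a sequence of mutually orthogonal vectors, which by (iv) is the claim for $T$.
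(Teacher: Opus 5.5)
Your proposal is correct and follows the paper's proof essentially step by step. Parts (i)--(iv) use the same intertwining computations; using injectivity of $T$ instead of an explicit left inverse, and commuting projections instead of block matrices, are only cosmetic differences. Part (v) makes the same appeal to Shimorin's result that the Cauchy dual of an analytic left-invertible operator has the wandering subspace property, combined with Corollary \ref{cyclic-circular} and part (iv).
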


\begin{proof}
(i)\ \ It is easy to verify that $T(T^\infty(\mathcal H)) \subseteq T^\infty(\mathcal H)$. To see the reverse inclusion, let $x \in T^\infty(\mathcal H)$. Then there exists a sequence $(x_n)_{n \geqslant 1}$ in $\mathcal H$ such that $x = T^n x_n$ for all $n \geqslant 1$. Set $y := x_1$. Then $Ty = x$. Further, $y = Lx = L T^n x_n = T^{n-1} x_n$ for all $n \geqslant 1$, where $L$ is a left-inverse of $T$. Thus $y \in T^\infty(\mathcal H)$. This proves (i).

(ii)\ \ Let $x \in T^\infty(\mathcal H)$ and $\lambda \in \mathcal G(T)$. Then there exists a sequence $(x_n)_{n \geqslant 1}$ in $\mathcal H$ such that $x = T^n x_n$ for all $n \geqslant 1$. Hence,
\beqn
U_\lambda x = U_\lambda T^n x_n = \lambda^n T^n U_\lambda x_n = T^n(\lambda^n U_\lambda x_n) \mbox{ for all } n \geqslant 1.
\eeqn
Thus $U_\lambda x \in T^\infty(\mathcal H)$, and hence, $T^\infty(\mathcal H)$ is an invariant subspace of $U_\lambda$. Similarly, using $U^*_\lambda T = \bar \lambda T U^*_\lambda$, we obtain that $T^\infty(\mathcal H)$ is an invariant subspace of $U^*_\lambda$. Thus $T^\infty(\mathcal H)$ is a reducing subspace of $U_\lambda$. The moreover part follows from the general fact that the restriction of a unitary to its reducing subspace is a unitary. This completes the verification of (ii).

(iii)\ \ Let $\lambda \in \mathcal G(T)$ and $U_\lambda$ be a unitary on $\mathcal H$ which intertwines $T$ and $\lambda T$. By virtue of (i) and (ii), let 
\beqn
T = \left(\begin{matrix}
T_1 & T_2\\
&\vspace{-.3cm}\\
0 & T_3 \end{matrix}\right) \quad \mbox{and} \quad  U_\lambda = \left(\begin{matrix}
U_{1,\lambda} & 0\\
&\vspace{-.3cm}\\
0 & U_{2, \lambda} \end{matrix}\right)
\eeqn
be the matrix representation of $T$ and $U_\lambda$ with respect to the decomposition $\mathcal H = T^\infty(\mathcal H) \oplus T^\infty(\mathcal H)^\bot$. Then $U_\lambda T = \lambda T U_\lambda$ gives that
\beqn
U_{1,\lambda} T_1 = \lambda T_1 U_{1, \lambda}\ \mbox{ and }\ U_{2,\lambda} T_3 = \lambda T_3 U_{2, \lambda}\ \mbox{ for all }\ \lambda \in \mathcal G(T).
\eeqn
In view of (ii), it follows that $\mathcal G(T) \subseteq \mathcal G(T_1)$ and $\mathcal G(T) \subseteq \mathcal G(T_3)$. Thus the conclusion in (iii) stands verified .

(iv)\ \ Let $\lambda \in \mathcal G(T)$ and $U_\lambda$ be a unitary on $\mathcal H$ such that $T = U^*_\lambda \lambda T U_\lambda$. Then we get
\beqn
T' = T(T^* T)^{-1} = (U^*_\lambda \lambda T U_\lambda) (U^*_\lambda T^* T U_\lambda)^{-1} = U^*_\lambda \lambda T' U_\lambda.
\eeqn
Thus $\lambda \in \mathcal G(T')$, and hence, $\mathcal G(T) \subseteq \mathcal G(T')$. The other way inclusion is obtained by observing that $(T')' = T$. This proves (iv).

(v)\ \ Note that as $T$ is analytic, $T'$ has the wandering subspace property \cite[Proposition 2.7]{S}. Thus $\mathcal H = \bigvee_{n \in \mathbb N} T'^n x$. In other words, $T'$ is a cyclic operator with cyclic vector $x \in \ker T^* = \ker T'^*$. Hence by Corollary \ref{cyclic-circular}, $T'$ is circular if and only if $\{T'^n x\}_{n \in \mathbb N}$ is a sequence of mutually orthogonal vectors. The desired conclusion is now immediate from (iv).
\end{proof}

Some immediate consequences of the preceding proposition are as follows.

\begin{corollary}
Let $T$ be a left-invertible operator on a complex separable Hilbert space $\mathcal H$. Then the following statements are true: 
\begin{enumerate}
\item[(i)] If $T^\infty(\mathcal H)$ is non-trivial and finite dimensional, then $\mathcal G(T)$ is finite.
\item[(ii)] If $\dim T^\infty(\mathcal H) = 1$, then $\mathcal G(T) = \mathcal G(T|_{T^\infty(\mathcal H)}) = \{1\}$.
\item[(iii)] If $T$ is analytic, then $\mathcal G(T) = \mathcal G(T|_{T^\infty(\mathcal H)})$ if and only if $T$ is circular.
\item[(iv)] If $T$ is circular, then either $T$ is analytic or $T^\infty(\mathcal H)$ is infinite dimensional.
\item[(v)] If $T$ is circular, then either $T$ has the wandering subspace property or $T'^\infty(\mathcal H)$ is infinite dimensional.
\end{enumerate}
\end{corollary}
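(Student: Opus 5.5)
The plan is to derive all five items from Proposition \ref{Tinfinity}, together with two simple observations. First, $\lambda T$ is unitarily equivalent to $T$ for every $\lambda\in\mathbb T$ exactly when $T$ is circular, so $T$ is circular if and only if $\mathcal G(T)=\mathbb T$. Second, $T_1:=T|_{T^\infty(\mathcal H)}$ is a bijection of $T^\infty(\mathcal H)$ onto itself. It is onto by Proposition \ref{Tinfinity}(i), and it is injective because $T$ is left-invertible.

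For (i), suppose $d:=\dim T^\infty(\mathcal H)$ satisfies $1\leqslant d<\infty$. Then $T_1$ is an invertible $d\times d$ matrix. If $\lambda\in\mathcal G(T_1)$, then $\lambda T_1=W^*T_1W$ for some unitary $W$. Taking determinants gives $\lambda^d\det T_1=\det T_1$ with $\det T_1\neq 0$, so $\lambda^d=1$. Hence $\mathcal G(T_1)$ is contained in the $d$-th roots of unity. Proposition \ref{Tinfinity}(iii) then gives $\mathcal G(T)\subseteq\mathcal G(T_1)$, which is finite.

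For (ii), take $d=1$. Then $T_1=a I$ with $a\neq 0$, and $\lambda a=a$ forces $\lambda=1$. So $\mathcal G(T_1)=\{1\}$. Since $1$ always lies in $\mathcal G(T)$, the inclusion from Proposition \ref{Tinfinity}(iii) yields $\mathcal G(T)=\mathcal G(T_1)=\{1\}$.

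For (iii), analyticity means $T^\infty(\mathcal H)=\{0\}$. The restriction of $T$ to the zero space is unitarily equivalent to every scalar multiple of itself, so $\mathcal G(T|_{T^\infty(\mathcal H)})=\mathbb T$. Therefore the stated equality holds if and only if $\mathcal G(T)=\mathbb T$, that is, if and only if $T$ is circular. The only point needing care here is the convention for an operator on the zero space.

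For (iv), assume $T$ is circular but not analytic, so that $T^\infty(\mathcal H)\neq\{0\}$. If $T^\infty(\mathcal H)$ were finite dimensional, part (i) would make $\mathcal G(T)=\mathbb T$ finite, which is absurd. Hence $T^\infty(\mathcal H)$ is infinite dimensional.

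For (v), note first that $T'$ is again left-invertible, since $T'^*T'=(T^*T)^{-1}$ is invertible. Also $T'$ is circular, because $\mathcal G(T')=\mathcal G(T)=\mathbb T$ by Proposition \ref{Tinfinity}(iv). Applying (iv) to $T'$ shows that either $T'$ is analytic or $T'^\infty(\mathcal H)$ is infinite dimensional. It then remains to show that analyticity of $T'$ implies the wandering subspace property for $(T')'=T$. This is Shimorin's duality \cite[Proposition 2.7]{S}, the same fact already used in Proposition \ref{Tinfinity}(v), applied here to the left-invertible operator $T'$. I expect this appeal to the duality to be the only nontrivial input; everything else is bookkeeping with $\mathcal G$.
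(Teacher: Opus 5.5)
Your proposal is correct and follows the paper's proof item by item. It uses Proposition \ref{Tinfinity}(i) and (iii) for (i)--(ii), the fact $\mathcal G(0)=\mathbb T$ for (iii), (i) for (iv), and Proposition \ref{Tinfinity}(iv) together with Shimorin's duality applied to $T'$ for (v). The only variation is in (i): you use $\det(\lambda T_1)=\lambda^d\det T_1$ where the paper uses $\sigma(A)=\lambda\sigma(A)$, and your version shows a little more, namely that $\mathcal G(T)$ lies in the $d$-th roots of unity.
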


\begin{proof}
(i)\ \ Suppose that $0 \ne k := \dim T^\infty(\mathcal H) < \infty$. Since $T$ is left-invertible, it follows from Proposition \ref{Tinfinity}(i) that $A := T|_{T^\infty(\mathcal H)}$ is an invertible $k \times k$ matrix. If $A$ is unitarily equivalent to $\lambda A$ for some $\lambda \in \mathbb T$, then $\sigma(A) = \sigma(\lambda A) = \lambda \sigma(A)$. By this, we conclude that $\mathcal G(A)$ must be finite. Consequently, by Proposition \ref{Tinfinity}(iii), $\mathcal G(T)$ is finite. The proof of (ii) is obvious in view of (i).

(iii)\ \ Suppose that $T$ is analytic. Then $T|_{T^\infty(\mathcal H)} = 0$. Since $\mathcal G(0) = \mathbb T$, we conclude that $\mathcal G(T) = \mathcal G(T|_{T^\infty(\mathcal H)})$ if and only if $T$ is circular.

(iv)\ \ Suppose that $T$ is circular but not analytic. If $T^\infty(\mathcal H)$ is finite dimensional, then by (i) $\mathcal G(T)$ is finite, which contradicts the fact that $T$ is circular. Thus, $T^\infty(\mathcal H)$ must be infinite dimensional. 

(v)\ \ Note that if $T$ is circular, then it follows from Proposition \ref{Tinfinity}(iv) that $T'$ is circular. Thus, the conclusion is now immediate in view of (iv) and \cite[Proposition 2.7]{S}. This completes the proof.
\end{proof}

\begin{corollary}
If $T$ is an isometry, then $\mathcal G(T) = \mathcal G(T|_{T^\infty(\mathcal H)})$.
\end{corollary}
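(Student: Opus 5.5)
We use the Wold decomposition of the isometry together with Proposition \ref{Tinfinity}(iii). Since $T$ is an isometry, it is left-invertible, so Proposition \ref{Tinfinity} applies. Proposition \ref{Tinfinity}(iii) already gives $\mathcal G(T) \subseteq \mathcal G(T|_{T^\infty(\mathcal H)})$, so only the reverse inclusion needs proof.

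By the Wold--von Neumann decomposition, $T^\infty(\mathcal H)$ reduces $T$. The restriction $W := T|_{T^\infty(\mathcal H)}$ is unitary; this also follows from Proposition \ref{Tinfinity}(i), since $W$ is a surjective isometry. The complement $T^\infty(\mathcal H)^\perp = \bigoplus_{n \in \mathbb N} T^n(\ker T^*)$ carries the pure part $S := T|_{T^\infty(\mathcal H)^\perp}$, which is a unilateral shift of multiplicity $\dim \ker T^*$. Thus $T = W \oplus S$.

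The pure part $S$ is circular: its wandering subspaces $\{S^n(\ker T^*)\}_{n\in\mathbb N}$ are mutually orthogonal and span $T^\infty(\mathcal H)^\perp$. Hence, by the remark following the first proposition of this section (take $V_\lambda = I$ in \eqref{U-theta-eq}), the map $U'_\lambda$ defined by $U'_\lambda(S^n x) = \lambda^n S^n x$ for $x \in \ker T^*$ extends to a unitary with $U'_\lambda S = \lambda S U'_\lambda$ for every $\lambda \in \mathbb T$. In other words, $\mathcal G(S) = \mathbb T$.

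Now fix $\lambda \in \mathcal G(W)$, and let $U_\lambda$ be a unitary on $T^\infty(\mathcal H)$ with $U_\lambda W = \lambda W U_\lambda$. Then $U_\lambda \oplus U'_\lambda$ is a unitary on $\mathcal H$ intertwining $T$ and $\lambda T$, so $\lambda \in \mathcal G(T)$. This gives $\mathcal G(T|_{T^\infty(\mathcal H)}) \subseteq \mathcal G(T)$, which completes the argument.

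The only point needing care is identifying the orthogonal complement of the hyper-range with the shift part, together with the fact that this complement is reducing. This is exactly the content of the Wold decomposition. For an isometry the unitary part is $\bigcap_n T^n(\mathcal H)$, which coincides with $T^\infty(\mathcal H)$ as defined in the paper, so no genuine obstacle arises.
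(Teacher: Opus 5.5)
Your proposal is correct and follows essentially the same route as the paper: Wold decomposition $T = W \oplus S$, Proposition \ref{Tinfinity}(iii) for $\mathcal G(T) \subseteq \mathcal G(T|_{T^\infty(\mathcal H)})$, and the direct sum of an intertwiner for the unitary part with one for the shift part for the reverse inclusion. The only difference is that you verify circularity of the shift $S$ directly, via the $V_\lambda = I$ construction in \eqref{U-theta-eq}, whereas the paper simply cites \cite[Proposition 4.1]{GKT}.
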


\begin{proof}
Suppose that $T$ is an isometry. By the Wold decomposition, $T = T_1 \oplus S$ on $\mathcal H = T^\infty(\mathcal H) \oplus T^\infty(\mathcal H)^\bot$, where $S$ is a unilateral (possibly operator-valued) shift. In view of Proposition \ref{Tinfinity}(iii), it is enough to show that $\mathcal G(T_1) \subseteq \mathcal G(T)$. To this end, let $\lambda \in \mathcal G(T_1)$. Then there exists a unitary $W_\lambda$ on $T^\infty(\mathcal H)$ such that $W_\lambda T_1 = \lambda T_1 W_\lambda$. Since $S$ is circular \cite[Proposition 4.1]{GKT}, there exists a unitary $V_\lambda$ on $T^\infty(\mathcal H)^\bot$ such that $V_\lambda S = \lambda S V_\lambda$. Set $U_\lambda := W_\lambda \oplus V_\lambda$. Then $U_\lambda$ is a unitary on $\mathcal H$ and $U_\lambda T = \lambda T U_\lambda$. Thus $\lambda \in \mathcal G(T)$. 
\end{proof}


The inclusion $\mathcal G(T) \subseteq \mathcal G(T|_{T^\infty(\mathcal H)})$ as mentioned in the Proposition \ref{Tinfinity}(iii) is strict in general. The following example attests to this fact. 

\begin{example}
Let $\mathcal H(\kappa)$ be the reproducing kernel Hilbert space of (scalar-valued) holomorphic functions defined on the open unit disc $\mathbb D$, determined by the kernel
\[
\kappa(z,w)=\frac{1}{1-z\overline{w}}+z\overline{w}(1+z)(1+\overline{w}),
\quad z,w\in\mathbb D .
\]
Clearly $\frac{1}{1-z\overline{w}} \leqslant \kappa(z,w)$ in the sense of non-negative definiteness. Thus, by \cite[Theorem 5.1]{PR}, the Hardy space $\mathcal H^2(\mathbb D)$ is contained in $\mathcal H(\kappa)$. Also, by \cite[Theorem 3.11]{PR},
\[
z(1+z)\,\overline{w}(1+\overline{w}) \leqslant
\|z(1+z)\|_{\mathcal H^2(\mathbb D)}^{2}
\frac{1}{1-z\overline{w}} .
\]
Consequently, $\kappa(z,w) \leqslant c\,\frac{1}{1-z\overline{w}}$ for some $c>0$. Therefore by \cite[Theorem 5.1]{PR}, $\mathcal H(\kappa) \subseteq \mathcal H^2(\mathbb D)$. Hence $\mathcal H(\kappa) = \mathcal H^2(\mathbb D)$
with equivalence of norms. This yields that the operator $\mathscr M_z$ of multiplication by the coordinate function $z$ on $\mathcal H(\kappa)$
is similar to $\mathscr M_z$ on $\mathcal H^2(\mathbb D)$ with identity operator being one of the invertible intertwiners. In particular,
$\mathscr M_z$ on $\mathcal H(\kappa)$ is left-invertible, analytic and $\ker \mathscr M_z^*$ (on $\mathcal H(\kappa)$) is one dimensional subspace spanned by constant functions. Thus, $\mathscr M_z^\infty(\mathcal H(\kappa)) = \{0\}$, and hence, $\mathcal G(\mathscr M_z|_{\mathscr M_z^\infty(\mathcal H(\kappa))}) = \mathbb T$. On the other hand, $\mathscr M_z$ on $\mathcal H(\kappa)$ is not circular. In fact from above observations, $\mathscr M_z$ on $\mathcal H(\kappa)$ is a cyclic operator with cyclic vector $1 \in \ker \mathscr M_z^*$. Thus, by Corollary \ref{cyclic-circular}, $\mathscr M_z$ is circular if and only if monomials are orthogonal. Since $\kappa$ is normalized at $0$ and polynomials are dense in $\mathcal H(\kappa)$, it follows that monomials are orthogonal in $\mathcal H(\kappa)$ if and only if $\kappa$ is diagonal (see \cite{CS}). Consequently, $\mathscr M_z$ on $\mathcal H(\kappa)$ is circular if and only if $\kappa$ is diagonal. Since $\kappa$ is not diagonal, $\mathscr M_z$ on $\mathcal H(\kappa)$ is not circular.
\end{example}

\section{Proof of Theorem \ref{main-thm}}

From here onwards, we assume that $T$ is a bounded linear operator on a complex separable Hilbert space $\mathcal H$ with the following decomposition:
\begin{eqnarray*}\label{T-decom}
&& T = \bigoplus_{j=1}^n T_j\ \text{ with respect to the decomposition }\ \mathcal H = \bigoplus_{j=1}^n \mathcal H_j, 
 \text{ where $n$ is }\\ 
 && \text{ a positive integer with } 1 \leqslant n \leqslant \infty, \text{ and each } T_j\  \text{ is irreducible}. \hspace{1.5cm} (\star)
\end{eqnarray*}

A crucial step in proving Theorem \ref{main-thm} is to establish that each operator $T_j$ is circular. Once this is shown, we can apply \cite[Proposition 1.3]{AHHK} to obtain a strongly continuous unitary representation for each $T_j$. Consequently, this yields a strongly continuous unitary representation for $T$. The following lemma formalizes this argument.  

\begin{lemma}\label{first-lemma}
Let $T$ be as described by $(\star)$. If $T_j$ is circular for each $j \in \{1, \ldots, n\}$, then $T$ is strongly circular.
\end{lemma}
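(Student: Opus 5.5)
The plan is to build the representation for $T$ summand by summand. Fix $j \in \{1, \ldots, n\}$. The operator $T_j$ is irreducible and circular on the separable Hilbert space $\mathcal H_j$, since $\mathcal H_j$ is a closed subspace of $\mathcal H$. So \cite[Proposition 1.3]{AHHK} supplies a strongly continuous unitary representation $\lambda \mapsto U^{(j)}_\lambda$ of $\mathbb T$ on $\mathcal H_j$ with
\[
U^{(j)}_\lambda T_j = \lambda T_j U^{(j)}_\lambda \quad \text{for all } \lambda \in \mathbb T.
\]
We then set $U_\lambda := \bigoplus_{j=1}^n U^{(j)}_\lambda$ on $\mathcal H = \bigoplus_{j=1}^n \mathcal H_j$.

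Next we check the algebraic properties. Each $U_\lambda$ is a block-diagonal operator with unitary blocks. Since $\|U^{(j)}_\lambda\| = 1$ uniformly in $j$, it is bounded and unitary: it is isometric, and $U_{\bar\lambda}$ inverts it. The homomorphism property $U_{\lambda\mu} = U_\lambda U_\mu$ and $U_1 = I$ hold blockwise. The intertwining relation $U_\lambda T = \lambda T U_\lambda$ also holds blockwise, because $T$ is block-diagonal with respect to the same decomposition. Hence both sides agree on each $\mathcal H_j$, and therefore on $\mathcal H$ by linearity and continuity.

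The only step requiring care, and the main (mild) obstacle when $n = \infty$, is strong continuity. Let $x = \sum_j x_j \in \mathcal H$, $\lambda_0 \in \mathbb T$ and $\epsilon > 0$. Choose $N$ such that $\sum_{j > N} \|x_j\|^2 < \epsilon^2$. Then
\[
\|U_\lambda x - U_{\lambda_0} x\|^2 = \sum_{j \leqslant N} \|U^{(j)}_\lambda x_j - U^{(j)}_{\lambda_0} x_j\|^2 + \sum_{j > N} \|U^{(j)}_\lambda x_j - U^{(j)}_{\lambda_0} x_j\|^2.
\]
Each term of the tail sum is at most $4\|x_j\|^2$, so the tail is bounded by $4\epsilon^2$. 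The finite sum tends to $0$ as $\lambda \to \lambda_0$ by the strong continuity of each $U^{(j)}$. Hence $\limsup_{\lambda \to \lambda_0} \|U_\lambda x - U_{\lambda_0} x\| \leqslant 2\epsilon$. Since $\epsilon > 0$ is arbitrary, $\lambda \mapsto U_\lambda$ is strongly continuous, so $T$ is strongly circular.
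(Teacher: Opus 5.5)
Your proposal is correct and follows the same route as the paper: invoke \cite[Proposition 1.3]{AHHK} on each irreducible circular summand $T_j$ and take the direct sum of the resulting strongly continuous representations. The paper asserts without comment that the direct sum is unitary, a homomorphism, intertwining, and strongly continuous; your tail estimate for the case $n=\infty$ supplies the one verification the paper leaves implicit.
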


\begin{proof}
Suppose that $T_j$ is circular for all $j \in \{1, \ldots, n\}$. Then it follows from \cite[Proposition 1.3]{AHHK} that there exist strongly continuous unitary representations $\pi_j$ of $\mathbb{T}$ such that
\[
\pi_j(\lambda) T_j = \lambda T_j \pi_j(\lambda)\ \text{ for all }\ \lambda \in \mathbb{T}\ \text{ and }\ j = 1, \ldots, n.
\]
Consider $\pi = \oplus_{j=1}^n \pi_j$. Then $\pi$ is a strongly continuous
unitary representation of $\mathbb{T}$ such that $\pi(\lambda) T = \lambda T \pi(\lambda)$ for all $\lambda \in \mathbb{T}$. 
\end{proof}

Let $T$ be as described by $(\star)$. Define the equivalence relation $\sim$ on the set $\{T_j : 1 \leqslant j \leqslant n\}$ as
$T_i \sim T_j \text{ if and only if } T_i \text{ is unitarily equivalent to } T_j,\ 1 \leqslant i,j \leqslant n$.
Let 
\beq\label{eq-relation-1}
X := \big\{[T_{i_k}] : 1 \leqslant k \leqslant m\big\} 
\eeq
denote the set of all disjoint equivalence classes of $\{T_j : 1 \leqslant j \leqslant n\}$, where $m$ is a positive integer with $1 \leqslant m \leqslant \infty$ and $[T_{i_k}]$ denotes the equivalence class of $T_{i_k}$.

The next lemma shows that if $T$ is circular, then the circle group $\mathbb T$ acts on $X$ and the stabilizer subgroup of each element of $X$ is an analytic subset of $\mathbb T$. For that, we recall a few basic facts from descriptive set theory. The reader is referred to \cite{K} for more details.

A subset of a topological space is said to be a {\it meager set} if it is a countable union of nowhere dense subsets. A  subset $A$ of a topological space is said to have the {\it Baire property} if there exists an open subset $U$ such that the symmetric difference $A \Delta U$ is a meager set. Recall that a separable complete metric space is called as {\it Polish space}. A subset $A$ of a Polish space $E$ is called {\it analytic} if there is a Polish space $F$ and a continuous function $f : F \to E$ such that $f(F) = A$. We quote here two celebrated results which will be used in the proof of Theorem \ref{main-thm}. The first one is due to Lusin and Sierpi\'nski which can be found in \cite[Theorem 21.6, Pg 153]{K} whereas the second one is attributed to Pettis as given in \cite[Theorem 9.9, Pg 61]{K}. 

\begin{theorem}[Lusin-Sierpi\'nski]\label{LST}
All analytic subsets of a Polish space have the Baire property.
\end{theorem}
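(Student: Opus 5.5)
The plan is to present the analytic set $A$ as the result of the Souslin operation $\mathcal A$ applied to closed sets. Then show, following Nikodym, that sets with the Baire property are stable under $\mathcal A$. Let $E$ be the Polish space and $A=f(F)$ with $F$ Polish and $f$ continuous. We may assume $A\neq\emptyset$.

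\emph{Step 1 (Souslin representation).} Every nonempty Polish space is a continuous image of the Baire space $\mathbb N^{\mathbb N}$. We may therefore take $F=\mathbb N^{\mathbb N}$. For a finite sequence $s$, let $N_s$ be the basic cylinder and set $F_s:=\overline{f(N_s)}$. These sets are closed and satisfy $F_{s}\supseteq F_{sk}$. By continuity of $f$, for each $x\in\mathbb N^{\mathbb N}$ we get $\bigcap_n F_{x|n}=\{f(x)\}$. Hence
\[
A=\bigcup_{x\in\mathbb N^{\mathbb N}}\ \bigcap_{n} F_{x|n}.
\]
More generally, put $A_s:=\bigcup_{x\supseteq s}\bigcap_n F_{x|n}=f(N_s)$. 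Then $A_s\subseteq F_s$, $A_s=\bigcup_k A_{sk}$, and $A_\emptyset=A$.

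\emph{Step 2 (Baire-property hulls).} For every $B\subseteq E$ we need a set $\widehat B\supseteq B$ with the Baire property such that every subset of $\widehat B\setminus B$ having the Baire property is meager. To build it, fix a countable base $\{W_i\}$ of $E$. Let $U$ be the union of those $W_i$ in which $B$ is meager; then $B\cap U$ is meager, since it is a countable union of meager sets. Take an $F_\sigma$ meager set $M\supseteq B\cap U$, and put $\widehat B:=(E\setminus U)\cup M$. Now suppose $C\subseteq\widehat B\setminus B$ has the Baire property but is not meager. Then $C$ is comeager in some $W_i$ that may be taken inside $E\setminus U$ up to a meager set. In that $W_i$ the set $B$ is meager, because $B$ misses a comeager part of $W_i$. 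This forces $W_i\subseteq U$, a contradiction. I expect this step to be the main technical point, and it is exactly where separability (the countable base) is used.

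\emph{Step 3 (sandwich argument).} For each $s$ set $B_s:=\widehat{A_s}\cap F_s$. Each $B_s$ has the Baire property and satisfies $A_s\subseteq B_s\subseteq F_s$. Define
\[
D_s:=B_s\setminus\bigcup_k B_{sk}.
\]
Each $D_s$ has the Baire property, since Baire-property sets form a $\sigma$-algebra. Moreover $D_s\subseteq \widehat{A_s}\setminus A_s$, because $A_s=\bigcup_k A_{sk}\subseteq\bigcup_k B_{sk}$. By Step 2, every $D_s$ is meager, so $D:=\bigcup_s D_s$ is meager as a countable union.

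We claim $B_\emptyset\setminus D\subseteq A$. Take $y\in B_\emptyset\setminus D$. Since $y\notin D_\emptyset$, there is $k_0$ with $y\in B_{(k_0)}$. Inductively, since $y\notin D_{x|n}$, we can extend the sequence so that $y\in B_{x|n}\subseteq F_{x|n}$ for all $n$. This produces $x\in\mathbb N^{\mathbb N}$ with $y\in\bigcap_nF_{x|n}$, and so $y=f(x)\in A$.

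We therefore have $B_\emptyset\setminus D\subseteq A\subseteq B_\emptyset$. Thus $A\,\Delta\, B_\emptyset\subseteq D$ is meager. Since $B_\emptyset$ has the Baire property, so does $A$, which proves the theorem.
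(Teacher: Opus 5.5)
Your proof is correct. The paper gives no proof of its own (it cites Kechris, Theorem 21.6), so the useful comparison is with the standard textbook arguments.

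The common alternative is game-theoretic:
\begin{enumerate}
\item[(a)] Write $A$ as the projection of a closed set $F\subseteq E\times\mathbb N^{\mathbb N}$ and play the unfolded Banach--Mazur game.
\item[(b)] This game is closed, so Gale--Stewart determinacy applies. It shows that a non-meager analytic set is comeager in some nonempty open set.
\item[(c)] Apply (b) to the analytic set $A\setminus U(A)$, where $U(A)$ is the union of the open sets in which $A$ is comeager. This yields the Baire property.
\end{enumerate}

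You instead give the classical Nikodym/Szpilrajn-Marczewski argument. It has three steps: a regular Souslin scheme of closed sets $F_s=\overline{f(N_s)}$ with $\bigcap_n F_{x|n}=\{f(x)\}$; Baire-property hulls built from a countable base; and the sandwich $B_\emptyset\setminus\bigcup_s D_s\subseteq A\subseteq B_\emptyset$. All three steps check out. In particular, $D_s\subseteq\widehat{A_s}\setminus A_s$ holds because $A_s=\bigcup_k A_{sk}\subseteq\bigcup_k B_{sk}$. The inductive choice of $x$ works because $y\notin D_{x|n}$ at every stage.

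Your route needs no determinacy and proves more. It shows that the $\sigma$-algebra of sets with the Baire property is closed under the Souslin operation. The same hull argument with measurable hulls also gives universal measurability of analytic sets. The game proof is shorter once the game machinery is available, and it is the version that extends to higher projective classes under determinacy hypotheses.

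One small tightening in Step 2. The precise reason $W_i$ lies in $E\setminus U$ is that $W_i\cap U\subseteq (W_i\setminus C)\cup M$ is an open meager set. It is therefore empty because $E$ is a Baire space. So that step uses the Baire category theorem (completeness of $E$), not only the countable base.
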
     

\begin{theorem}[Pettis]\label{PT}
Let $A$ be a non-meager susbet of a topological group $G$. If $A$ has the Baire property, then the set $A^{-1}A = \{x^{-1}y : x, y \in A\}$ contains an open neighbourhood of $1$.
\end{theorem}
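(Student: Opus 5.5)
The plan is to reduce to the classical fact that, in a topological group, a set with the Baire property is ``comeager'' in some open set, and then intersect $A$ with a right translate of itself. Since $A$ has the Baire property, fix an open set $U \subseteq G$ with $A \,\Delta\, U$ meager. Because $A$ is non-meager and $A \subseteq U \cup (A\Delta U)$, the set $U$ is non-meager, in particular non-empty. I will produce a neighbourhood $N$ of $1$ such that $A \cap Ag \neq \emptyset$ for every $g \in N$. Indeed, if $a = bg$ with $a, b \in A$, then $g = b^{-1}a \in A^{-1}A$, which gives $N \subseteq A^{-1}A$.

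First I choose $N$. Fix $u \in U$ and set $N := u^{-1}U$. This is an open neighbourhood of $1$, because left translation is a homeomorphism. For $g \in N$ we have $ug \in U$, and also $ug \in Ug$. Hence $W_g := U \cap Ug$ is a non-empty open set. Right translation $x \mapsto xg$ is a homeomorphism of $G$, so it carries meager sets to meager sets. Since $A\Delta U$ is meager, $Ag \,\Delta\, Ug = (A\Delta U)g$ is meager as well. Consequently
\[
W_g \setminus (A \cap Ag) \subseteq (U\setminus A) \cup (Ug \setminus Ag)
\]
is meager. So if $W_g$ is non-meager, then $A\cap Ag \neq \emptyset$, and we are done.

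The main obstacle is the last point: showing that \emph{every} non-empty open subset of $G$, and in particular $W_g$, is non-meager. We only know this for $U$, and $G$ is an arbitrary topological group, not assumed to be Baire. I would handle it by combining homogeneity with the Banach category theorem, which says that a union of any family of meager open sets is meager (see \cite{K}).

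Suppose some non-empty open $V$ were meager. Then every translate $xV$ is meager and open, and these translates cover $G$. By the Banach category theorem $G$ itself would be meager, contradicting that $A$ is non-meager. Hence every non-empty open set is non-meager. Applying this to $W_g$ finishes the argument.

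If one prefers to avoid the general Banach category theorem, there is an alternative, sufficient in the Polish case where $G=\mathbb T$ is used later. In a second countable group, the countably many translates $x_kV$, with $\{x_k\}$ dense, already cover $G$ whenever $V$ is a non-empty open set. This gives the same contradiction using only countable unions.
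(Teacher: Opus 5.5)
The paper does not prove this statement; it quotes it from Kechris \cite[Theorem 9.9]{K}. Your argument is correct and is essentially the standard proof given there. You take an open $U$ with $A\,\Delta\,U$ meager, and for $g$ in the neighbourhood $u^{-1}U$ of $1$ you note that $A\cap Ag$ is comeager in the non-empty open set $U\cap Ug$, hence non-empty. This gives $g\in A^{-1}A$.

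You also handled correctly the one delicate point: a non-empty open subset of an arbitrary topological group is not obviously non-meager. Homogeneity together with the Banach category theorem settles this, as does your countable dense set of translates in the separable case. In the paper's only application, $G=\mathbb T$ is compact and hence a Baire space, so that step is immediate there.
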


The following lemma is an intermediate step of the proof of Theorem \ref{main-thm}.

\begin{lemma}\label{grp-action}
Let $T$ be as described by $(\star)$ and let $X$ be as described by \eqref{eq-relation-1}. If $T$ is circular, then the following are true:
\begin{enumerate}
\item[(i)] The group $\mathbb T$ acts on $X$.
\item[(ii)] For each $k \in  \{1, \ldots, m\}$, the stabilizer subgroup $Stab([T_{i_k}])$ of $[T_{i_k}]$ is an analytic subset of $\mathbb T$. 
\end{enumerate}  
\end{lemma}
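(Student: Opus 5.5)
For (i), fix $\lambda\in\mathbb T$ and a unitary $U_\lambda$ with $U_\lambda T=\lambda TU_\lambda$. Then $U_\lambda$ implements a unitary equivalence between $T$ and $\lambda T$, and $\lambda T=\bigoplus_j \lambda T_j$ is again an orthogonal direct sum of irreducible operators. The plan is to use the uniqueness (up to unitary equivalence and multiplicity) of the decomposition of an operator into a direct sum of irreducibles, when such a decomposition exists. Concretely, I would argue as follows. For each $j$, the compression $U_\lambda(\mathcal H_j)$ is a reducing subspace of $\lambda T$ on which $\lambda T$ is unitarily equivalent to $T_j$, hence irreducible. An irreducible reducing subspace of $\bigoplus_l \lambda T_l$ must project nontrivially onto some $\mathcal H_l$. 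Consider the intertwiner $P_{\mathcal H_l}U_\lambda|_{\mathcal H_j}$, which intertwines $T_j$ and $\lambda T_l$ together with their adjoints. By Schur's lemma for irreducible operators (the intertwiner $A$ of irreducibles with $AX=YA$, $AX^*=Y^*A$ gives that $A^*A$ and $AA^*$ are scalars), this forces $T_j\cong \lambda T_l$ for some $l$.

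This lets me define
\[
\lambda\cdot[T_j]:=[T_l]\quad\text{whenever } \bar\lambda T_j\cong T_l,
\]
equivalently $T_l\cong \bar\lambda T_j$. This is well defined on classes, since unitary equivalence is preserved under scalar multiplication and the class of $T_l$ is determined by the unitary equivalence class of $\bar\lambda T_j$. The existence of such an $l$ follows from the argument above applied with $\bar\lambda\in\mathbb T$, using that $T$ is circular for $\bar\lambda$ as well. The action axioms $1\cdot x=x$ and $(\lambda\mu)\cdot x=\lambda\cdot(\mu\cdot x)$ are immediate from $\overline{\lambda\mu}\,T_j=\bar\lambda(\bar\mu T_j)$. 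The main obstacle is the existence step: I have to make the Schur-type argument rigorous, in particular showing that some $P_{\mathcal H_l}U_\lambda|_{\mathcal H_j}\neq 0$ yields a genuine unitary equivalence. Since $A^*A=c I$ with $c>0$, the operator $c^{-1/2}A$ is an isometry intertwining $T_j$ and $\lambda T_l$, with range reducing $\lambda T_l$. Irreducibility of $T_l$ then makes it unitary.

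For (ii), note that $Stab([T_{i_k}])=\{\lambda\in\mathbb T: \bar\lambda T_{i_k}\cong T_{i_k}\}=\mathcal G(T_{i_k})$. This is a group, as recorded earlier in the paper. I would write it as the image of a continuous map from a Polish space. Take $F=\{(\lambda,U)\in\mathbb T\times\mathcal U(\mathcal H_{i_k}) : UT_{i_k}=\lambda T_{i_k}U\}$, where the unitary group $\mathcal U(\mathcal H_{i_k})$ carries the strong operator topology, in which it is Polish because $\mathcal H_{i_k}$ is separable. The defining condition is closed in this topology, since multiplication is jointly strongly continuous on bounded sets and $U\mapsto UT_{i_k}x$, $U\mapsto T_{i_k}Ux$ are continuous. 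So $F$ is a closed subset of a Polish space, hence Polish. The projection $(\lambda,U)\mapsto\lambda$ is continuous with image $\mathcal G(T_{i_k})$, which is therefore analytic.
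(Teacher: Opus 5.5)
Your proposal is correct. Part (ii) is the paper's argument: the set $\{(\lambda,U): UT_{i_k}=\lambda T_{i_k}U\}$ is closed in $\mathbb T\times\mathcal U(\mathcal H_{i_k})$ with the strong operator topology, hence Polish, and its projection onto $\mathbb T$ is the stabilizer.

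For part (i) you take a different, more self-contained route. The paper invokes the uniqueness theorem for direct sums of irreducibles, \cite[Theorem 3.1]{FJW}, to obtain a permutation $\sigma$ with $\lambda T_{i_k}\cong T_{\sigma(i_k)}$. You instead apply Schur's lemma to the compressions $P_{\mathcal H_l}U_\lambda|_{\mathcal H_j}$. These intertwine $T_j$ with $\lambda T_l$, together with their adjoints, so any nonzero one is a scalar multiple of a unitary. Some such compression is nonzero because $\sum_l P_{\mathcal H_l}=I$.

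Your argument needs only the existence of a matching summand, not the full uniqueness theorem. You also check well-definedness on classes and the axioms $1\cdot x=x$ and $(\lambda\mu)\cdot x=\lambda\cdot(\mu\cdot x)$. The paper's definition of ``acts'' asks only for a well-defined map, but these axioms are what the later orbit-stabilizer step relies on, and what makes $Stab([T_{i_k}])$ a subgroup. The paper's route is shorter, given the citation.

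One small slip: running your argument with $\lambda$ itself already gives $T_j\cong\lambda T_l$, i.e.\ $T_l\cong\bar\lambda T_j$, which is exactly what your definition of $\lambda\cdot[T_j]$ requires. Running it with $\bar\lambda$, as you wrote, gives the opposite relation. This is harmless, since circularity supplies an intertwiner for every point of $\mathbb T$.
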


\begin{proof}
Suppose that $T$ is circular. Let $\lambda \in \mathbb T$. Since $T$ is unitarily equivalent to $\lambda T$, it follows from \cite[Theorem 3.1]{FJW} (see also \cite[Proposition 3.4]{FJW}) that $\lambda T_{i_k}$ is unitarily equivalent to $T_{\sigma(i_k)}$ for some permutation $\sigma$ on $\{i_k : k = 1, \ldots, m\}$. Thus, the circle group $\mathbb T$ acts on $X$ via the following action:
\[(\lambda, [T_{i_k}]) \mapsto [T_{\sigma(i_k)}]\ \text{ for all }\ \lambda \in \mathbb T \ \text{ and }\ k = 1, \ldots, m.\]
This proves (i).

To see (ii), let $[T_{i_k}]$ be an arbitrary but fixed element of $X$. Let $\mathcal U(\mathcal H_{i_k})$ denote the set of all unitaries on $\mathcal H_{i_k}$. Since $\mathbb T$ with usual topology and $\mathcal U(\mathcal H_{i_k})$ with strong operator topology are Polish spaces, the product space $\mathbb T \times \mathcal U(\mathcal H_{i_k})$ is also a Polish space in the product topology. Now consider the subset $Y$ of $\mathbb T \times \mathcal U(\mathcal H_{i_k})$ defined as follows:
\[Y = \{(\lambda, U) : U T_{i_k} = \lambda T_{i_k} U\}.\]
Then $Y$ is closed. In fact, if $\{(\lambda_n, U_n)\}_{n \geqslant 1}$ is any sequence in $Y$ which converges to $(\lambda, U) \in \mathbb T \times \mathcal U(\mathcal H_{i_k})$, then $\lambda_n \to \lambda$ and $U_n \to U$ (SOT) as $n \to \infty$. Thus, for all $h \in \mathcal H_{i_k}$, we get
\beqn
U T_{i_k} h = \lim_{n \to \infty} U_n T_{i_k} h = \lim_{n \to \infty} \lambda_n T_{i_k} U_n h = \lambda T_{i_k} U h.
\eeqn
This verifies the fact that $Y$ is closed. Consequently, $Y$ is a Polish space. Since $\mathbb T \times \mathcal U(\mathcal H_{i_k})$ is equipped with the product topology, the projection map $\pi_1 : \mathbb T \times \mathcal U(\mathcal H_{i_k}) \to \mathbb T$ is continuous. Observe that $\pi_1(Y) = Stab([T_{i_k}])$. Thus, $Stab([T_{i_k}])$ is an analytic subset of $\mathbb T$. This completes the proof.
\end{proof}

\begin{proof}[Proof of Theorem \ref{main-thm}]
Suppose that $T$ is circular. Then by Lemma \ref{grp-action}(i), the circle group $\mathbb T$ acts on $X$, where $X$ is as described by \eqref{eq-relation-1}. Let $[T_{i_k}]$ be an arbitrary but fixed element of $X$. By the orbit-stabilizer theorem, there is a bijective correspondence between $Orb([T_{i_k}])$ and the set of all distinct left cosets of $Stab([T_{i_k}])$. Consequently, the set of all distinct left cosets of $Stab([T_{i_k}])$ is countable. Let these distinct left cosets be denoted by $\lambda_\ell Stab([T_{i_k}])$, $\ell = 1, \ldots, p$, where $\lambda_\ell \in \mathbb T$ and $p$ is a positive integer with $1 \leqslant p \leqslant \infty$. Thus, we have
\[\mathbb T = \bigsqcup_{l=1}^p \lambda_\ell Stab([T_{i_k}]).\]
If each of the coset $\lambda_\ell Stab([T_{i_k}])$ is meager, then $\mathbb T$ would be meager which contradicts the Baire category theorem. Therefore, there exists $\ell_0 \in \{1, \ldots, p\}$ such that $\lambda_{\ell_0} Stab([T_{i_k}])$ is non-meager. Since multiplication by a fixed element of $\mathbb T$ is a homeomorphism of $\mathbb T$, it follows that $Stab([T_{i_k}])$ is a non-meager subset of $\mathbb T$. Thus, by Lemma \ref{grp-action}(ii), we get that $Stab([T_{i_k}])$ is a non-meager analytic subset of $\mathbb T$. Since $Stab([T_{i_k}])$ is a subgroup of $\mathbb T$, by Theorems \ref{LST} and \ref{PT} we obtain that $Stab([T_{i_k}])$ contains an open neighbourhood of $1$. Consequently, $Stab([T_{i_k}])$ is an open subgroup of $\mathbb T$. Since in a topological group every open subgroup is closed, we get that $Stab([T_{i_k}])$ is both open and closed. The connectedness of $\mathbb T$ yields that $Stab([T_{i_k}]) = \mathbb T$. Consequently, $T_{i_k}$ is circular. Since $[T_{i_k}]$ was arbitrarily chosen element of $X$, in view of Lemma \ref{first-lemma} we conclude that $T$ is strongly circular. The converse is obvious, and thus, completes the proof.
\end{proof}

As an application of Theorem \ref{main-thm}, we show that every operator in the Cowen-Douglas class $B_n(\Omega)$ is circular if and only if it is strongly circular. Recall that for a connected open subset $\Omega$ of $\mathbb C$ and a finite positive integer $n$, a bounded linear operator $T$ on a complex separable Hilbert space $\mathcal H$ is said to belong to the Cowen-Douglas class $B_n(\Omega)$ if the following conditions are satisfied:
\begin{enumerate}
\item[(i)] $\Omega \subseteq \sigma(T)$;
\item[(ii)] $\text{ran}(T-w) = \mathcal H$ for all $w \in \Omega$;
\item[(iii)] $\bigvee_{w \in \Omega} \ker (T-w) = \mathcal H$;
\item[(iv)] $\dim \ker (T-w) = n$ for all $w \in \Omega$.
\end{enumerate}  

\begin{corollary}
If $T \in B_n(\Omega)$, then $T$ is circular if and only if $T$ is strongly circular.
\end{corollary}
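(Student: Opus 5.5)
The plan is to reduce the corollary to Theorem \ref{main-thm}. I will show that every $T \in B_n(\Omega)$ decomposes as a finite orthogonal direct sum of irreducible operators. Only the direction ``circular $\Rightarrow$ strongly circular'' needs work; the converse is immediate from the definitions.

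The first step uses the standard structure theory of Cowen--Douglas operators. If $\mathcal M$ is a nonzero reducing subspace of $T$, then $T|_{\mathcal M}$ lies in $B_k(\Omega)$ for some $1 \leqslant k \leqslant n$. Indeed, $\ker(T-w) = \ker(T|_{\mathcal M}-w) \oplus \ker(T|_{\mathcal M^\perp}-w)$ for each $w\in\Omega$. Surjectivity of $T-w$ passes to both summands because $\mathcal M$ reduces $T$. The spanning property (iii) also passes to the summands: projecting $\bigvee_{w}\ker(T-w)$ onto $\mathcal M$ with the orthogonal projection $P_{\mathcal M}$, which commutes with $T$, gives $\bigvee_w \ker(T|_{\mathcal M}-w)=\mathcal M$. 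This forces $\ker(T|_{\mathcal M}-w)\neq 0$, since otherwise $\mathcal M=0$. The dimension $k=\dim\ker(T|_{\mathcal M}-w)$ is constant in $w$: the kernel dimensions of the two summands add to $n$, and each is upper semicontinuous on the connected set $\Omega$ because the operators are surjective (Fredholm-type continuity of kernels). Condition (i) follows from (iv).

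The second step is an induction on $n$. If $T$ is irreducible, we are done. Otherwise $T = T|_{\mathcal M}\oplus T|_{\mathcal M^\perp}$ with both summands in some $B_{k}(\Omega)$ and $B_{n-k}(\Omega)$ with $k,n-k\geqslant 1$. Iterating at most $n$ times gives $T=\bigoplus_{j=1}^{r}T_j$ with $r\leqslant n$ and each $T_j$ irreducible. (This finite decomposition is also in the literature; see the Cowen--Douglas work and Jiang--Wang.) Theorem \ref{main-thm} with finite $r$ then applies directly: if $T$ is circular, it is strongly circular.

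The main obstacle is the constancy of $\dim\ker(T|_{\mathcal M}-w)$ on $\Omega$. The cleanest route is to note that $T-w$ is surjective with finite-dimensional kernel, hence semi-Fredholm with index $n$. The restrictions $T|_{\mathcal M}-w$ and $T|_{\mathcal M^\perp}-w$ are then semi-Fredholm and surjective, so each has index equal to its kernel dimension. Index is locally constant, hence constant on the connected set $\Omega$, and the kernel dimensions of the two summands are therefore constant. With this in hand, the rest is bookkeeping.
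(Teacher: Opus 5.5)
Your proposal is correct, but it obtains the finite decomposition by a different route than the paper. Both proofs end the same way, by applying Theorem \ref{main-thm} to a finite direct sum of irreducibles.

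\textbf{The paper's route.} The paper never analyzes reducing subspaces of $T$ directly. It cites [CDG, Corollary 3.7] to get that the von Neumann algebra $V^*(T)=\{T,T^*\}'$ is finite dimensional for $T\in B_n(\Omega)$. It then cites [FJW, Corollary 3.2], which says that finite dimensionality of $V^*(T)$ forces $T$ to be a finite direct sum of irreducible operators.

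\textbf{Your route.} You prove the splitting lemma by hand. A nontrivial reducing subspace $\mathcal M$ gives $T|_{\mathcal M}\in B_k(\Omega)$ and $T|_{\mathcal M^\perp}\in B_{n-k}(\Omega)$ with $1\leqslant k\leqslant n-1$. Two ingredients make this work:
\begin{itemize}
\item The spanning argument with $P_{\mathcal M}$ shows $k\geqslant 1$ on the nonzero summand.
\item Constancy of $k$ on $\Omega$ follows from local constancy of the Fredholm index of the surjective operators $T|_{\mathcal M}-w$. Your final-paragraph index argument settles this cleanly; the upper-semicontinuity remark earlier is looser than needed.
\end{itemize}
Induction on $n$ then finishes, and the base case shows that every operator in $B_1(\Omega)$ is irreducible.

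\textbf{Comparison.} Your argument is self-contained and elementary, and it gives the explicit bound of at most $n$ irreducible summands. The paper's argument is shorter given the citations. It also isolates the property actually used, namely $\dim V^*(T)<\infty$, so the same proof applies verbatim to any operator with finite-dimensional $V^*(T)$, not only to Cowen--Douglas operators.
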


\begin{proof}
Suppose that $T \in B_n(\Omega)$. Let $V^*(T)$ denote the von Neumann algebra consisting of operators commuting with both $T$ and $T^*$. Then by \cite[Corollary 3.7]{CDG}, $V^*(T)$ is finite dimensional. Hence by \cite[Corollary 3.2]{FJW}, $T$ can be written as a direct sum of finitely many irreducible operators. Consequently, by Theorem \ref{main-thm}, $T$ is circular if and only if $T$ is strongly circular.
\end{proof}

Unlike the case of Theorem \ref{main-thm}, circularity does not, in general, pass to the irreducible components of a direct integral decomposition. The following example attests to this phenomenon (the reader is referred to \cite{D} for a detailed study on the theory of direct integral). 
\begin{example}\label{direct-integral}
Consider the Hilbert space $L^2(\mathbb T)$ of square-integrable functions with respect to the normalized Lebesgue measure
\[
d\mu(z)=\frac{d\theta}{2\pi}, \quad z=e^{i\theta}\in\mathbb T.
\]
Consider the operator $\mathscr M_z$ of multiplication by the coordinate function $z$ on $L^2(\mathbb T)$. Note that $\mathscr M_z$ is a circular operator; in fact, it is strongly circular. Indeed, for every $\lambda\in\mathbb T$, the unitary operator
\[
(U_\lambda f)(z)=f(\lambda z), \qquad f\in L^2(\mathbb T),
\]
satisfies $U_\lambda \mathscr M_z = \lambda \mathscr M_{z} U_\lambda$ and the map $\lambda \mapsto U_\lambda$ is a strongly continuous unitary representation of $\mathbb T$. It follows from Wiener's Theorem \cite[Theorem 1.2.1]{N} that $\mathscr M_z$ cannot be decomposed into countably many irreducible operators. In fact, $\mathscr M_z$ admits the direct integral decomposition
\[
\mathscr M_z
=\int_{\mathbb T}^{\oplus} T_z\, d\mu(z)
\quad \text{on} \quad
L^2(\mathbb T)
=
\int_{\mathbb T}^{\oplus} H_z\, d\mu(z),
\]
where $H_z=\mathbb C$ and $T_z:H_z\to H_z$ is defined as $T_z(1)=z
$. Clearly, none of the operators $T_z$ is circular.
\end{example}

\medskip \textit{Acknowledgment}.
The authors convey their sincere thanks to Sameer Chavan for his several helpful suggestions.

\end{document}